\newtheorem{theorem}{Theorem}[section]
\newtheorem{corollary}{Corollary}[theorem]
\DeclareMathOperator{\EX}{\mathbbm{E}}
\begin{document}
\setlength{\abovedisplayskip}{4pt}
\setlength{\belowdisplayskip}{10pt}
\setlength{\abovedisplayshortskip}{4pt}
\setlength{\belowdisplayshortskip}{10pt}

\title{Open networks of infinite server queues with non-homogeneous multivariate batch Poisson arrivals}

\author{Somya Mehra \and Peter G. Taylor}

\date{%
    \small{School of Mathematics and Statistics, The University of Melbourne, Parkville,  Australia}
}

\maketitle

\section*{Abstract}
In this paper, we consider the occupancy distribution for an open network of infinite server queues with multivariate batch arrivals following a non-homogeneous Poisson process, and general service time distributions. We derive a probability generating function for the transient occupancy distribution of the network, and prove that it is necessary and sufficient for ergodicity that the expected occupancy time for each batch be finite. Further, we recover recurrence relations for the transient probability mass function formulated in terms of a distribution obtained by compounding the batch size with a multinomial distribution.

\section{Introduction}

Infinite server queues with Poisson batch arrivals --- denoted $\text{M}_t^\text{X}/\text{G}/\infty$ queues in Kendall's notation --- have elicited significant research attention, from the canonical work of \textcite{shanbhag1966infinite, reynolds1968some, brown1969some} in the 1960s, to studies by \textcite{chatterjee1989non, liu1991thegr} and the recent treatise of \textcite{daw2019distributions}. The  independence between customers within the queue --- termed the `non-interacting property' by \parencite{liu1991thegr} --- allows for an analytic characterisation of the numbers of busy servers and departures, typically in the form of a probability generating function (PGF) \parencite{brown1969some, chatterjee1989non, daw2019distributions}.\\

A natural generalisation is an open network of infinite server queues, with multivariate batch arrivals following a non-homogeneous Poisson process and general service time distributions. With customers arriving one at a time, networks of $\text{M}_t/\text{G}/\infty$ queues have been studied by \textcite{harrison1981note, keilson1990networks, massey1993networks}, amongst others. In contrast, networks with multivariate batch arrivals have received comparatively little attention. Models have been proposed for specific applications such as private-line telecommunication services \parencite{mccalla2002time} and malarial relapses \parencite{mehra2022hypnozoite}, while stochastic fluid networks have been shown to arise as scaling limits of appropriate batch-arrival infinite server queueing networks \parencite{kella1999linear}. To the best of our knowledge, results for more general networks of $\cdot/G/\infty$ queues with multivariate batch arrivals are yet to appear in the literature.\\

Here, we analyse the occupancy distribution in a network of infinite server queues, with multivariate batches arriving according to a non-homogeneous Poisson process. Our model assumptions are set out in Section \ref{sec::model_structure}. We derive the PGF for the transient queue length distribution in Theorem \ref{theorem::open_network_queue_pgf}, extending the construction we adopted in \textcite{mehra2022hypnozoite}; similar arguments have been presented previously by \parencite{brown1969some, harrison1981note, chatterjee1989non} and others.\\

While infinite server queues with single arrivals are necessarily stable when the expected service time is finite, the queue length Markov chain can be transient or null-recurrent when batch arrivals are permitted. Stability conditions for the $\text{M}^\text{X}/\text{G}/\infty$ queue, accommodating heterogenous customers within each batch, have been characterised by \textcite{cong1994mx}. When service times are exponential, \textcite{yajima2016stability} have shown that it is necessary and sufficient for an infinite server queue with a batch Markovian arrival process to be stable that the batch size distribution has a bounded logarithmic moment. For networks with multivariate batch arrivals, we extend these results in Section \ref{sec::stability_cond} to show that a necessary and sufficient condition for ergodicity is that the expected occupancy time for each batch be finite (Theorem \ref{sec::stability_cond}). When the customer occupancy time distributions have exponential tails, this is equivalent to the logarithmic moment condition of \parencite{cong1994mx, yajima2016stability} (Corollaries \ref{corollary::logarthmic_moment} and \ref{corollary::logarthmic_moment_2}).\\

We recover recurrence relations for the probability mass function (PMF) of the time-dependent queue length distribution, formulated with respect to the PMF of the batch size compounded with a multinomial distribution in Theorem \ref{theorem:recurrence_relation}. The utility of these formulae is constrained by the underlying compound distribution. In the context of the $\text{M}^\text{X}/\text{G}/\infty$ queue, \textcite{willmot2001transient, willmot2002transient, willmot2009time} found that for distributions within `Sundt and Jewell's family' --- encompassing the binomial, Poisson, logarithmic, geometric and negative binomial distributions, and zero-inflated analogues thereof \parencite{willmot1988sundt} --- the recurrence relations may be computationally tractable. Here, we find that univariate batch sizes within `Sundt and Jewell's family' \parencite{willmot1988sundt} may likewise yield tractable formulae when the initial queue allocation for each customer is independent and identically distributed (i.i.d.) (Section \ref{sec::sundt_jewel_batch}).

\section{Model structure} \label{sec::model_structure}

Consider an open network of $J$ infinite server queues, indexed by $j = 1, \dots, J$, such that
\begin{itemize}
    \item Arrivals occur at points $T_1,T_2,\ldots $ of a non-homogeneous Poisson process with rate $\lambda(t)$.
    \item At time point $T_i$, for $j=1,\ldots,J$, $\Sigma_{ij}$ customers arrive at queue $j$. The vectors $\mathbf{\Sigma}_i = (\Sigma_{i1},\ldots,\Sigma_{iJ})$ are independent vector-valued random variables with non-negative integer components selected from a multivariate distribution with probability generating function
    \begin{align}
        G_{\mathbf{S}}(z_1, \dots, z_J) := \EX \Bigg[ \prod^J_{j=1} z_j^{S_{j}} \Bigg]
    \end{align}
    defined on a subset ${\cal D} \subseteq {\cal C}^J$ that contains the $J$-dimensional unit polydisc with $|z_j|\leq1$. Note that the distribution of $\mathbf{\Sigma}_i$ does not depend on $i$. We use the generic random variable $\mathbf{S}$ to denote a random variable with this distribution.
    \item The $\ell^{th}$ customer that enters queue $j$ at time point $T_i$ is marked with a stochastic process $X_{i,\ell}^j(t)$ where $X_{i,\ell}^j(t) = k$ if the customer will be in queue $k$ at time $T_i+t$. For each $i$ and $\ell = 1,\ldots,\Sigma_{ij}$, the $X_{i,\ell}^j(t)$ are selected independently with a law such that $P(X_{i,\ell} ^j(t) = k) = q^j_k(t)$.
\end{itemize}

Under this model, the routes of customers through the network (that is, the sequence of visited nodes and associated service times) are mutually independent, conditional on the node of entry. This `non-interacting property' \parencite{liu1991thegr} enables explicit analysis. Here, we focus on the number of customers in the network at time $t$, denoted by the random vector $\mathbf{N}(t) = (N_1(t), \dots, N_J(t))$.\\

For notational convenience, we introduce the random vector $\mathbf{C}_i(t) = (C_{i,1}(t),\ldots,C_{i,J}(t))$, with
\begin{align}
    C_{i,k} (t) = \sum_{j=1}^J \sum_{\ell = 1}^{\Sigma_{ij}} \mathbbm{1} \{ X_{i,\ell} ^j(t) = k \} \label{eq:cj}
\end{align}
giving the number of customers from the $i$th batch that are in queue $k$ at time $T_i+t$. It follows from our assumptions above that the distribution of $\mathbf{C}_i(t)$ is independent of $i$ and we denote by $\mathbf{C}(t)$ a generic random variable with this distribution.

\section{The transient queue length distibution}

We first characterise the time-dependent PGF for the state of the network, generalising the construction of \textcite{mehra2022hypnozoite} (which was used to analyse an open network of infinite server queues tailored to within-host superinfection and hypnozoite dynamics for \textit{Plasmodium vivax} malaria).

\begin{theorem}{(Multivariate PGF)} \label{theorem::open_network_queue_pgf}
For $z=(z_1,\ldots,z_J) \in {\cal D}$
\begin{align}
    \phi_t(\mathbf{z}) &:= \EX \bigg[ \prod^J_{k=1} z_j^{N_j(t)} \bigg]\\
    &= \exp \bigg\{ -\int^t_0 \lambda(\tau) \bigg[1 - G_\mathbf{S} \bigg( 1 + \sum^J_{k=1} (z_k - 1) q^1_k(t), \dots, 1 + \sum^J_{k=1} (z_k - 1) q^J_k(t)  \bigg) \bigg] d \tau \bigg\}. \label{eq::queue_pgf_general}
\end{align} 
\end{theorem}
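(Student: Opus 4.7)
The plan is to exploit the non-interacting property. Because every customer present at time $t$ arrived as part of some batch at some epoch $T_i \leq t$, and the contributions of distinct batches to $\mathbf{N}(t)$ are mutually independent once we condition on the Poisson process of arrival epochs, one can decompose
\begin{align*}
    \mathbf{N}(t) = \sum_{i : T_i \leq t} \mathbf{C}_i(t - T_i),
\end{align*}
reducing the computation of $\phi_t(\mathbf{z})$ to two pieces: the PGF of a single batch's contribution $\mathbf{C}(s)$, and an average of its product over the marked non-homogeneous Poisson process of arrivals.

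First I would derive the PGF of $\mathbf{C}(t)$ defined in \eqref{eq:cj}. Conditional on the batch composition $\mathbf{\Sigma}_i = \boldsymbol{\sigma}$, the $\sigma_j$ customers entering queue $j$ route independently and identically, each occupying queue $k$ at elapsed time $t$ with probability $q^j_k(t)$ (with defect $1 - \sum_k q^j_k(t)$ for customers who have already left the network). The tally across entrants from a single queue is therefore multinomial, and factorising over $j$ gives
\begin{align*}
    \EX\left[ \prod_{k=1}^J z_k^{C_{i,k}(t)} \,\middle|\, \mathbf{\Sigma}_i = \boldsymbol{\sigma} \right] = \prod_{j=1}^J \left( 1 + \sum_{k=1}^J (z_k - 1)\, q^j_k(t) \right)^{\sigma_j}.
\end{align*}
Averaging over $\mathbf{\Sigma}$ via its joint PGF $G_\mathbf{S}$ collapses this to a single expression $g_t(\mathbf{z}) := G_\mathbf{S}\bigl( 1 + \sum_k (z_k-1) q^1_k(t), \dots, 1 + \sum_k (z_k-1) q^J_k(t) \bigr)$, which is precisely the inner integrand appearing in \eqref{eq::queue_pgf_general}.

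To assemble the full PGF, I would condition on the number $N_{\text{Poi}}(t) = n$ of arrivals in $[0,t]$; the arrival epochs are then distributed as the order statistics of $n$ i.i.d. samples from the density $\lambda(\tau)/\Lambda(t)$ on $[0,t]$, where $\Lambda(t) := \int_0^t \lambda(s)\, ds$. Independence of the $\mathbf{C}_i$ yields
\begin{align*}
    \EX\left[ \prod_{i : T_i \leq t} g_{t - T_i}(\mathbf{z}) \,\middle|\, N_{\text{Poi}}(t) = n \right] = \left( \frac{1}{\Lambda(t)} \int_0^t \lambda(\tau)\, g_{t-\tau}(\mathbf{z})\, d\tau \right)^n,
\end{align*}
and summing the Poisson series weighted by $e^{-\Lambda(t)} \Lambda(t)^n / n!$ telescopes into $\exp\bigl\{ -\int_0^t \lambda(\tau)[1 - g_{t-\tau}(\mathbf{z})]\, d\tau \bigr\}$. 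Equivalently, the identity follows directly from Campbell's theorem applied to the functional $\sum_i \log g_{t - T_i}(\mathbf{z})$ of the marked non-homogeneous Poisson process.

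Nothing in the argument is conceptually deep; the main care required is the bookkeeping between elapsed and wall-clock time (so that a batch arriving at $\tau$ contributes through $q^j_k(t - \tau)$, as opposed to $q^j_k(t)$), together with the interchange of summation and integration in the Poisson series. The latter is legitimate on the closed unit polydisc because $|g_s(\mathbf{z})| \leq 1$ there, and extends to all of ${\cal D}$ by analytic continuation in $\mathbf{z}$.
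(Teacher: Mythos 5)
Your proposal is correct and follows essentially the same route as the paper's proof: derive the PGF of $\mathbf{C}(t)$ by conditioning on the batch composition and using the multinomial marking, then condition on the Poisson count of arrivals in $[0,t)$, invoke the order-statistics representation of the arrival epochs, and sum the Poisson series to obtain the exponential form. Your remark about the elapsed-time bookkeeping (the integrand should involve $q^j_k(t-\tau)$ rather than $q^j_k(t)$) is well taken and consistent with what the paper's own proof actually substitutes into Equation (\ref{eq::queue_pgf_general_compound_dist}).
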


\begin{proof}
We use similar reasoning to \textcite{mehra2022hypnozoite}. In short, we condition on the multivariate batch size, and the sequence of arrival times. For related systems, similar proofs have been previously presented by others, including \textcite{brown1969some, harrison1981note, chatterjee1989non}.\\\

We begin by deriving the PGF for the generic random vector $\mathbf{C}(t)$ described in Equation (\ref{eq:cj}). Since $X^j_{i, \ell}(t)$ are independent random variables taking values in $\{1,\ldots,J\}$, the number of customers originating in queue $j$ that are in queue $k$ at time $T_i+t$ has a multinomial distribution with parameters $q^j_k(t)$. Therefore, conditional on a multivariate batch of size $\Sigma_{i1} = n_1, \dots, \Sigma_{iJ} = n_J$,
\begin{align*}
    \EX \bigg[ \prod^J_{k=1} z_k^{C_k(t)}  \, \Big| \, \Sigma_{i1} = n_1, \dots, \Sigma_{iJ} = n_J \bigg] =  \prod^J_{j=1} \bigg( 1 + \sum^J_{k=1} (z_k - 1) q^j_k(t) \bigg)^{n_k}.
\end{align*}

By the law of total expectation, it follows that
\begin{align}
    \EX \Bigg[ \prod^J_{k=1} z_k^{C_k(t)} \Bigg] &= \sum^\infty_{n_1 = 0} \dots \sum^\infty_{n_J = 0} P(\Sigma_{i1} = n_1, \dots, \Sigma_{iJ} = n_J) \EX \Bigg[ \prod^J_{k=1} z_k^{C_k(t)}  \, \Big| \, \Sigma_{i1} = n_1, \dots, \Sigma_{iJ} = n_J \Bigg] \notag \\
    &= G_\mathbf{S} \bigg( 1 + \sum^J_{k=1} (z_k - 1) q^1_k(t), \dots, 1 + \sum^J_{k=1} (z_k - 1) q^J_k(t)  \bigg). \label{eq::compound_queue_PGF}
\end{align}

Under the assumption that arrivals follow a non-homogeneous Poisson process, the number of arrivals $M(t)$ in the interval $[0, t)$
\begin{align*}
    M(t) \sim \text{Poisson}(\Lambda(t))
\end{align*}
where
\begin{align*}
    \Lambda(t) = \int^t_0 \lambda(\tau) d \tau.
\end{align*}

Following \textcite{harrison1981note}, conditional on the event $\{M(t) = m\}$, the distribution of $T_1, \dots, T_m$ is the same as the distribution of the order statistics of $m$ i.i.d. variables with density
\begin{align*}
    f(\tau) = \frac{\lambda(\tau)}{\Lambda(t)} \mathbbm{1}_{\{ \tau \in [0, t) \}}, 
\end{align*}
yielding the conditional PGF
\begin{align*}
    \EX \bigg[ &\prod^J_{k=1} z_k^{N_k(t)} \Big| M(t) = m \bigg] = \Bigg( \int^t_0 \frac{\lambda(\tau)}{\Lambda(t)} \cdot \EX \Bigg[ \prod^J_{k=1} z_k^{C_k(t - \tau)} \Bigg] d \tau \Bigg)^m.
\end{align*}

Using the law of total expectation, we thus deduce
\begin{align}
    \EX \bigg[ \prod^J_{j=1} z_j^{N_j(t)} \ \bigg] &=\sum^\infty_{m=0} \EX \bigg[ \prod^J_{j=1} z_j^{N_j(t)} \Big| M(t) = m \bigg] \cdot P(M(t) = m) \notag \\
    &= \exp \bigg\{ -\int^t_0 \lambda(\tau) \Bigg(1 -  \EX \Bigg[ \prod^J_{j=1} z_j^{C_j(t - \tau)} \Bigg] \Bigg) d \tau \bigg\}. \label{eq::queue_pgf_general_compound_dist}
\end{align}

Substituting Equation (\ref{eq::compound_queue_PGF}) into Equation (\ref{eq::queue_pgf_general_compound_dist}) yields expression (\ref{eq::queue_pgf_general}).
\end{proof}

\section{Necessary and sufficient conditions for ergodicity} \label{sec::stability_cond}

Here, we characterise the stability, that is, ergodicity, of the network, assuming a homogeneous arrival rate $\lambda(t) = \lambda$. For $\text{M}/\text{G}/\infty$ queues, a necessary and sufficient condition for ergodicity is that the occupation time distribution has finite expectation. The analogous constraint in this setting is that at least one customer from each batch will be present somewhere in the network for a time $W$ that has finite expectation.\\

A customer who arrives to queue $j$ at time $T$ has left the network the network before time $T+t$ with probability
\begin{align*}
    1 - Q_j(t) = 1 - \sum^J_{k=1} q^j_k(t).
\end{align*}

The expected time that at least one customer from a batch is present in the network can then be written
\begin{align}
    \EX[W] &= \sum^{\infty}_{n_1=0} \dots \sum^{\infty}_{n_J=0} P(\mathbf{S}=(n_1, \dots, n_J)) \cdot \int^\infty_0 \bigg( 1 - \prod^J_{j=1} \big[ 1 - Q_j(t) \big]^{n_j} \bigg) d \tau \notag \\
    &= \int^\infty_0 \Big[1 - G_\mathbf{S} \Big( 1 - Q_1(\tau), \dots, 1 - Q_J(\tau)  \Big) \Big] d \tau. \label{eq:exp_service_time}
\end{align}
where we have used Tonelli's theorem to swap the order of summation and integration; and noted that the resultant integrand can be written as a sum of two absolutely convergent series. This expression is central to the ergodicity of the network.\\

\begin{theorem} \label{theorem::ergodicity}
The network is ergodic if and only if $\EX[W] < \infty$  
\end{theorem}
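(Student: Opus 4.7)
The plan is to exploit the transient PGF from Theorem~\ref{theorem::open_network_queue_pgf}. With $\lambda(\tau)=\lambda$, substituting $u = t - \tau$ in the integral yields
\begin{align*}
    \phi_t(\mathbf{z}) = \exp\bigg\{-\lambda \int_0^t \Big[1 - G_\mathbf{S}\big( r^1(u,\mathbf{z}), \dots, r^J(u,\mathbf{z})\big)\Big]\, du\bigg\},
\end{align*}
where $r^j(u,\mathbf{z}) := 1 + \sum_{k=1}^J (z_k - 1)\, q^j_k(u)$. For $\mathbf{z}\in[0,1]^J$ the integrand is non-negative, so $t \mapsto \phi_t(\mathbf{z})$ is non-increasing and the pointwise limit $\phi_\infty(\mathbf{z}) := \lim_{t\to\infty}\phi_t(\mathbf{z})$ exists in $[0,1]$. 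The central observation is that setting $\mathbf{z}=\mathbf{0}$ and comparing with (\ref{eq:exp_service_time}) yields $\phi_\infty(\mathbf{0}) = \exp(-\lambda\,\EX[W])$, so the finiteness of $\EX[W]$ is precisely what controls the limiting mass at the empty state.

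For sufficiency I would assume $\EX[W] < \infty$. Since $r^j(u,\mathbf{z}) \geq 1 - Q_j(u)$ for $\mathbf{z}\in[0,1]^J$ and $G_\mathbf{S}$ is componentwise non-decreasing on $[0,1]^J$, the integrand satisfies the domination
\begin{align*}
    0 \leq 1 - G_\mathbf{S}\big(r^1(u,\mathbf{z}), \dots, r^J(u,\mathbf{z})\big) \leq 1 - G_\mathbf{S}\big(1-Q_1(u),\dots,1-Q_J(u)\big),
\end{align*}
whose integral over $[0,\infty)$ equals $\EX[W]<\infty$ by (\ref{eq:exp_service_time}). Dominated convergence then lets me send $\mathbf{z}\uparrow\mathbf{1}$ inside the integral to conclude $\phi_\infty(\mathbf{z})\to 1$; hence $\phi_\infty$ is a proper PGF, and the multivariate continuity theorem for PGFs gives $\mathbf{N}(t) \Rightarrow \mathbf{N}_\infty$ with this PGF. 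Time-homogeneity of the arrival process makes the limit law stationary, so the network is ergodic.

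For necessity I would suppose $\EX[W]=\infty$. Then $\phi_\infty(\mathbf{0}) = 0$, i.e.\ $P(\mathbf{N}(t)=\mathbf{0}) \to 0$. The network regenerates at batch arrivals to an empty network, with i.i.d.\ idle periods of mean $1/\lambda$; a standard regenerative-process argument identifies the long-run fraction of empty time with $(1/\lambda)/\EX[\text{cycle length}]$, so the expected cycle length must be infinite. This rules out positive recurrence and hence ergodicity.

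The main obstacle I anticipate lies in the necessity direction: turning the pointwise conclusion $P(\mathbf{N}(t)=\mathbf{0})\to 0$ into the formal non-existence of a stationary law. One route is the regenerative argument above; an alternative is to argue directly that any candidate stationary distribution $\pi$ on the state space must satisfy $\pi(\mathbf{0})>0$ (since $\mathbf{0}$ is reachable from every state and the arrival process is Poisson), so that convergence of $P(\mathbf{N}(t)=\mathbf{0})$ to $0$ is incompatible with convergence to $\pi(\mathbf{0})$. By contrast, the sufficiency direction is essentially automatic once the domination is in hand, since continuity of $G_\mathbf{S}$ on the unit polydisc holds for any PGF.
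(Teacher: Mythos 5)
Your proposal is correct and follows essentially the same route as the paper: both take the $t\to\infty$ limit of the transient PGF, prove sufficiency by dominated convergence with the dominating integrand $1 - G_\mathbf{S}(1-Q_1(u),\dots,1-Q_J(u))$ whose integral is $\EX[W]$, and prove necessity via the observation that $\lim_{t\to\infty} P(\mathbf{N}(t)=\mathbf{0}) = e^{-\lambda \EX[W]}$. The only cosmetic differences are that the paper justifies passing from PGF convergence to convergence of point probabilities via Montel's and Weierstrass's theorems rather than the continuity theorem, and states the necessity direction in the contrapositive rather than through your regenerative-cycle argument.
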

\begin{proof}
Let $\Omega \in \mathcal{C}^J$ denote the open $J$-dimensional polydisc with $|z_j| < 1$ for each $j \in \{1, \dots, J\}$. By the dominated convergence theorem, for all $\mathbf{z} \in \Omega$, 
\begin{align*}
    \lim_{t \to \infty}  \EX \bigg[ \prod^J_{k=1} z_k^{N_j(t)} \bigg] = \EX \bigg[ \prod^J_{k=1} z_k^{\lim_{t \to \infty} N_j(t)} \bigg]
\end{align*}
since $P(\mathbf{N}(t) = \mathbf{n}) \leq 1$ for all $\mathbf{n} \in \mathbbm{Z}^J, t \geq 0$.\\

Observe that the transient PGF $\phi_t(\mathbf{z})$ for $\mathbf{N}(t)$ defines a family of functions $\{\phi_t\}_{t \geq 0}$ that is both analytic and bounded on $\Omega$, with $|\phi_t(\mathbf{z})| \leq 1$ for all $t \geq 0$. By Montel's theorem, there exists a subsequence $\phi_{t_i}$ which converges uniformly on all compact subsets $K \subset \Omega$ (Theorem 1.7.3 of \parencite{korevaar2017several}). It follows from a theorem of Weirstrass (Theorem 1.7.1 of \parencite{korevaar2017several}) that $\lim_{t \to \infty} \phi_t$ is analytic on $\Omega$, with the uniform convergence of partial derivatives
\begin{align*}
    \lim_{i \to \infty} \Bigg\{ \frac{\partial^{v_1 + \dots + v_n}}{\prod^n_{k=1} \partial z_k^{v_k}} \phi_{t_i}(\mathbf{z}) \Bigg\} =  \frac{\partial^{v_1 + \dots + v_n}}{\prod^n_{k=1} \partial z_k^{v_k}} \Big\{ \lim_{t \to \infty} \phi_t(\mathbf{z}) \Big\}
\end{align*}
on all compact subsets $K \subset \Omega$. In particular, for all $\mathbf{v} \in \mathbbm{Z}^J$, we note that
\begin{align*}
    \lim_{i \to \infty} P( \mathbf{N}(t_i) = \mathbf{v}) = \frac{\partial^{v_1 + \dots + v_n}}{\prod^n_{k=1} \partial z_k^{v_k}} \Big\{ \lim_{t \to \infty}  \phi_t(\mathbf{z}) \Big\} \Big|_{\mathbf{z} = \mathbf{0}}.
\end{align*}


It is therefore sufficient to consider the infinite time limit of the multivariate PGF (\ref{eq::queue_pgf_general})
\begin{align*}
    \lim_{t \to \infty} \phi_t(\mathbf{z}) & = \exp \bigg\{ - \lambda \int^\infty_0 \bigg[1 - G_\mathbf{S} \bigg( 1 + \sum^J_{k=1} (z_k - 1) q^1_k(\tau), \dots, 1 + \sum^J_{k=1} (z_k - 1) q^J_k(\tau)  \bigg) \bigg] d \tau \bigg\}.
\end{align*}
As in \textcite{cong1994mx}, it is necessary and sufficient for the limiting function to be a PGF that the multivariate limit
\begin{align*}
    \lim_{z_k \uparrow 1, \, k \in \{1, \dots, J \}} \Big\{ \lim_{t \to \infty}  \phi_t(\mathbf{z}) \Big\} = 1,
\end{align*}
where $(z_1, \dots, z_J) \to (1, \dots, 1)$ along any path through $\Omega$. Since the generating function is analytic inside $\Omega$, this is equivalent to the condition
\begin{align}
    \lim_{z \uparrow 1} \int^\infty_0 \Big[1 - G_\mathbf{S} \Big( 1 + (z-1) Q_1(\tau), \dots, 1 + (z-1) Q_J(\tau)  \Big) \Big] d \tau \bigg\} = 0. \label{eq:stability_cond}
\end{align}

For notational convenience, we denote the integrand
\begin{align*}
    H(z, \tau) := 1 - G_\mathbf{S} \big( 1 + (z-1) Q_1(\tau), \dots, 1 + (z-1) Q_J(\tau) \big),
\end{align*}
and observe from Equation (\ref{eq:exp_service_time}) that
\begin{align*}
    \EX[W] = \int^\infty_0 H(0, \tau) d \tau.
\end{align*}
Since $G_S$ is a multivariate PGF, for fixed $\tau$, $H(z, \tau)$ is a decreasing function of $z$ in the domain $[0, 1]$. In particular, for each $\tau \geq 0$ we have pointwise convergence $H(z, \tau) \to 0$ as $z \to 1$, with $|H(z, \tau)| \leq H(0, \tau)$. Suppose $\EX[W] < \infty$. Then by the dominated convergence theorem,
\begin{align*}
    \lim_{z \uparrow 1} \int^\infty_0 H(z, \tau) d \tau =  \int^\infty_0 \lim_{z \uparrow 1} H(z, \tau) d \tau = 0.
\end{align*}
We thus see that $\EX[W]<\infty$ is a sufficient condition for Equation (\ref{eq:stability_cond}) to hold.\\

Now, suppose that the network is ergodic. Then the limiting probability of the queue being empty can be written
\begin{align*}
    \lim_{t \to \infty} P(\mathbf{N}(t) = \mathbf{0}) = e^{- \lambda \EX[W]} > 0,
\end{align*}
which necessarily implies $\EX[W]<\infty$.
\end{proof}

In direct analogy to Lemma 1 of \textcite{cong1994mx}, under the constraint that the time spent in the network by each customer has finite expectation --- which is necessary for batch occupancy time to have finite expectation, that is, $\EX[W] < \infty$  --- we recover a sufficient condition for ergodicity.

\begin{corollary}
Suppose the network occupation time distributions $Q_j(t)$ have finite expectation. Then the network is ergodic if $\EX[S_1 + \dots + S_J ]<\infty$.
\end{corollary}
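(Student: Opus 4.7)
The plan is to invoke Theorem \ref{theorem::ergodicity}: it suffices to bound $\EX[W]$ (as written in Equation (\ref{eq:exp_service_time})) by a quantity that is finite whenever $\EX[S_1 + \dots + S_J] < \infty$ and each $\int_0^\infty Q_j(\tau)\,d\tau < \infty$. The natural route is a union-bound inequality applied inside the PGF.

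First, I would establish the pointwise inequality
\begin{align*}
1 - \prod_{j=1}^J \bigl( 1 - Q_j(\tau) \bigr)^{n_j} \;\leq\; \sum_{j=1}^J n_j\, Q_j(\tau)
\end{align*}
for all non-negative integers $n_1, \dots, n_J$ and all $\tau \geq 0$. This follows from a probabilistic argument: attach to each of the $n_1 + \dots + n_J$ customers from a hypothetical batch an independent Bernoulli indicator (of being still in the network at time $\tau$) with success probability $Q_j(\tau)$ for customers originating in queue $j$; the left-hand side is the probability that at least one indicator equals one, and Boole's inequality yields the right-hand side. Equivalently, an induction on $\sum_j n_j$ using $1-(1-a)(1-b) \leq a + b$ for $a,b \in [0,1]$ works. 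Since each $Q_j(\tau) \in [0,1]$, both arguments apply.

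Next, I would take expectations with respect to $\mathbf{S}$ in the inequality above; since $\EX[S_j] \leq \EX[S_1 + \dots + S_J] < \infty$, Tonelli's theorem justifies interchanging expectation and sum, giving
\begin{align*}
1 - G_{\mathbf{S}}\bigl( 1 - Q_1(\tau), \dots, 1 - Q_J(\tau) \bigr) \;\leq\; \sum_{j=1}^J \EX[S_j]\, Q_j(\tau).
\end{align*}
Integrating over $\tau \in [0, \infty)$ and swapping the finite sum with the integral by Tonelli's theorem again yields
\begin{align*}
\EX[W] \;\leq\; \sum_{j=1}^J \EX[S_j] \int_0^\infty Q_j(\tau)\, d\tau,
\end{align*}
which is finite by the two hypotheses (each $\int_0^\infty Q_j(\tau)\, d\tau < \infty$, and only finitely many terms appear in a sum of finite numbers). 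Theorem \ref{theorem::ergodicity} then delivers ergodicity.

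The argument is essentially routine once the union-bound inequality is in hand; the only conceptual step is recognising that step, which sidesteps any direct analysis of the multivariate PGF $G_{\mathbf{S}}$ and reduces the condition to the product of the two marginal moment conditions in the hypothesis.
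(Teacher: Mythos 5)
Your proposal is correct and follows essentially the same route as the paper: the same union-bound inequality $1 - \prod_{j}(1-Q_j(\tau))^{n_j} \leq \sum_{j} n_j Q_j(\tau)$, the same resulting bound $\EX[W] \leq \sum_{j} \EX[S_j]\int_0^\infty Q_j(\tau)\,d\tau$, and the same appeal to Theorem \ref{theorem::ergodicity}. The only difference is that you spell out a justification for the union-bound inequality, which the paper simply asserts.
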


\begin{proof}
Similarly to \textcite{cong1994mx}, we observe that
\begin{align*}
    1- \prod^J_{j=1} [1-Q_j(\tau)]^{n_j} \leq \sum^J_{j=1} n_j Q_j(\tau)
\end{align*}
Therefore, from Equation (\ref{eq:exp_service_time}), it follows that
\begin{align*}
    \EX[W] \leq \sum^\infty_{n_1=0} \dots \sum^\infty_{n_J=0} P(\mathbf{S} = (n_1, \dots, n_J)) \int^\infty_0 \sum^J_{j=1} n_j Q_j(\tau) d \tau = \sum^J_{j=1} \EX[S_j] \cdot \int^\infty_0 Q_j(\tau) d \tau.
\end{align*}
A finite expected network occupation time for each customer $\int^\infty_0 Q_j(\tau) d \tau<\infty$ and a finite mean batch size $\EX[S_j] < \infty$ for each $j \in \{1, \dots, J\}$ is thus a sufficient condition for $\EX[W]<\infty$ and, by Theorem \ref{theorem::ergodicity}, the ergodicity of the network. 
\end{proof}

For $\text{M}^\text{X}/\text{M}/\infty$ queues, \textcite{cong1994mx, yajima2016stability} have shown that a necessary and sufficient condition for stability is the batch size $S$ has a finite logarithmic moment, that is,
\begin{align*}
    \EX[ \log(S+1) ] < \infty.
\end{align*}
Given an exponential service time of mean duration $\mu$ per customer, the expected occupancy time for a batch can written
\begin{align*}
    \EX[W] = \sum^\infty_{j=1} P(S=j) \sum^j_{k=1} \frac{1}{\mu k}.
\end{align*}
Since
\begin{align}
    \sum^n_{j=1} \frac{1}{j} - \log(n + 1) \to \gamma_e \text{ as } n \to \infty \label{eq:harmonic}
\end{align}
where $\gamma_e$ denotes the Euler–Mascheroni constant \parencite{whittaker2020course}, in the case of the $\text{M}^\text{X}/\text{M}/\infty$ queue
\begin{align*}
    \EX[ \log(S+1) ] < \infty \iff \EX[W] < \infty.
\end{align*}

We can generalise this observation to recover a sufficient condition for ergodicity when the network occupancy time distributions $Q_j(t)$ have exponentially-bounded tails, that is, there exists $\delta>0, t_0>0$ such that
\begin{align*}
    Q_j(\tau) \leq e^{-\delta \tau} \text{ for all } \tau \geq t_0 \text{ and } j \in \{1, \dots, J \}.
\end{align*}

\begin{corollary}
\label{corollary::logarthmic_moment}
Suppose the occupancy time distributions $Q_j(t)$ have exponentially bounded tails. Then the network is ergodic if $\EX[ \log(S_1 + \dots + S_J + 1)]<\infty$.
\end{corollary}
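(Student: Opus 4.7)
The plan is to invoke Theorem \ref{theorem::ergodicity} and bound $\EX[W]$ directly from Equation (\ref{eq:exp_service_time}). Writing the contribution of a fixed batch size $\mathbf{n}$ as
\begin{align*}
    I(\mathbf{n}) := \int^\infty_0 \bigg( 1 - \prod^J_{j=1} [1-Q_j(\tau)]^{n_j} \bigg) d \tau,
\end{align*}
the target is a pointwise bound of the form $I(\mathbf{n}) \leq C \log(n+1) + C'$, where $n := n_1 + \dots + n_J$, so that the logarithmic-moment hypothesis transfers directly to give $\EX[W] = \sum_{\mathbf{n}} P(\mathbf{S}=\mathbf{n}) \, I(\mathbf{n}) < \infty$.

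First I would record the integrand estimate $1 - \prod_j [1-Q_j(\tau)]^{n_j} \leq \sum_j n_j Q_j(\tau)$ --- the same inequality (provable by induction on $n$) that was used in the preceding corollary --- and combine it with the exponential-tail hypothesis to obtain $1 - \prod_j [1-Q_j(\tau)]^{n_j} \leq n\, e^{-\delta \tau}$ for $\tau \geq t_0$. This, together with the trivial bound by $1$, suggests splitting the integral at $t^* := \max\{t_0, \log(n)/\delta\}$ for $n \geq 1$: apply the crude bound on $[0, t^*]$ and the exponential bound on $[t^*, \infty)$. A direct computation of the resulting truncated integrals then yields
\begin{align*}
    I(\mathbf{n}) \leq t^* + \frac{n\, e^{-\delta t^*}}{\delta} \leq \frac{\log(n+1)}{\delta} + t_0 + \frac{1}{\delta}
\end{align*}
for $n \geq 1$, with $I(\mathbf{0}) = 0$ trivially.

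Averaging this pointwise bound over $\mathbf{S}$ via Equation (\ref{eq:exp_service_time}) gives $\EX[W] \leq \EX[\log(S_1 + \dots + S_J + 1)]/\delta + t_0 + 1/\delta$, which is finite by hypothesis, and Theorem \ref{theorem::ergodicity} then delivers ergodicity. The only non-routine step is the choice $t^* = \log(n)/\delta$: it is precisely the point at which the trivial and exponential bounds balance, producing the logarithmic-scale growth that mirrors the harmonic-sum asymptotic (\ref{eq:harmonic}) employed earlier for the $\text{M}^\text{X}/\text{M}/\infty$ queue. I do not anticipate substantive obstacles beyond verifying the pointwise bound in the small-$n$ regime where $t^* = t_0$, in which case $n\, e^{-\delta t_0} \leq 1$ by definition of $t^*$.
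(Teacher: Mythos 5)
Your proof is correct, but it takes a genuinely different route from the paper's. The paper first replaces $Q_j(\tau)$ by $e^{-\delta\tau}$ inside the PGF (using monotonicity of $G_{\mathbf{S}}$ in each argument), reduces to the univariate variable $S = S_1+\dots+S_J$, and then computes the resulting integral \emph{exactly}: a binomial expansion together with identity 0.155.4 of Gradshteyn--Ryzhik gives $\int_0^\infty\big[1-(1-e^{-\delta\tau})^n\big]\,d\tau = \tfrac{1}{\delta}\sum_{k=1}^n \tfrac1k$, after which the harmonic-sum asymptotic (\ref{eq:harmonic}) converts finiteness into the logarithmic-moment condition. You instead prove the pointwise bound $I(\mathbf{n}) \le \log(n+1)/\delta + t_0 + 1/\delta$ directly, by combining the Weierstrass-product inequality $1-\prod_j[1-Q_j(\tau)]^{n_j}\le \sum_j n_j Q_j(\tau)\le n e^{-\delta\tau}$ with a split of the integral at the balance point $t^*=\max\{t_0,\log(n)/\delta\}$; the case analysis at $t^*$ checks out, including the small-$n$ regime where $ne^{-\delta t_0}\le 1$. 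Your argument is more elementary --- it needs no integral identity and no Euler--Mascheroni asymptotic --- and it delivers an explicit quantitative bound $\EX[W]\le \EX[\log(S+1)]/\delta + t_0 + 1/\delta$. What the paper's exact computation buys in exchange is the two-sided equivalence $Y<\infty \iff \EX[\log(S+1)]<\infty$ for the bounding integral, which is precisely what gets recycled (with the reverse inequality $Q_j(\tau)\ge e^{-\eta\tau}$) to prove the \emph{necessity} direction in Corollary \ref{corollary::logarthmic_moment_2}; your one-sided truncation bound would not serve that later purpose without a matching lower-bound argument.
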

\begin{proof}
By assumption, there exist $\delta > 0, t_0>0$ such that 
\begin{align*}
    0 < G_\textbf{S} \big(1 - e^{-\delta \tau}, \dots, 1-e^{-\delta \tau} \big) \leq G_\textbf{S} \big(1 - Q_1(\tau), \dots, 1-Q_J(\tau) \big) \leq 1 \text{ for all } \tau \geq t_0
\end{align*}
since $G_\textbf{S}$ is a generating function. Equation (\ref{eq:exp_service_time}) then yields the bound
\begin{align*}
    \EX[W] \leq t_0 + \int^\infty_{t_0} \Big[ 1 - G_\textbf{S} \big(1 - e^{-\delta \tau}, \dots, 1-e^{-\delta \tau} \big) \Big] d \tau
\end{align*}

Setting $S = S_1 + \dots + S_J$, we observe that
\begin{align*}
    G_\textbf{S} \big(1 - e^{-\delta \tau}, \dots, 1-e^{-\delta \tau} \big) = \sum^\infty_{n=0} P(S=n) (1-e^{-\delta \tau})^n.
\end{align*}

Therefore, a \textit{sufficient} condition for $\EX[W]<\infty$ --- and, by Theorem \ref{theorem::ergodicity}, the ergodicity of the network --- is that
\begin{align*}
    Y:= \int^\infty_0 \sum^\infty_{n=0} P(S=n) \big[ 1 - (1-e^{-\delta \tau})^n \big] d \tau < \infty.
\end{align*}
Using the binomial expansion and identity 0.155.4 of \parencite{jeffrey2007table}, we compute.
\begin{align*}
    Y &= \sum^\infty_{n=1} P(S=n) \sum^n_{k=1} {n \choose k} \frac{(-1)^{k+1}}{\delta k} = \sum^{\infty}_{n=1} P(S=n) \sum^n_{k=1} \frac{1}{\delta k},
\end{align*}
so from Equation (\ref{eq:harmonic}) \parencite{whittaker2020course}
\begin{align*}
    Y < \infty \iff \EX[\log(S+1)] < \infty.
\end{align*}
Therefore, $\EX[ \log(S + 1)] < \infty$ is a sufficient condition for the ergodicity of the network.
\end{proof}

For Markovian queueing networks, where the time spent by a customer is each node is exponentially-distributed, we can strengthen Corollary \ref{corollary::logarthmic_moment} to recover a necessary and sufficient condition for ergodicity.

\begin{corollary}
For each $j \in \{1, \dots, J\}$, suppose there exists $\delta_j>0$ such that $Q_j(t) = \Theta(e^{-\delta_j \tau})$ in the limit $\tau \to \infty$. Then the network is ergodic if and only if $\EX[\log(S_1 + \dots S_J + 1)] < \infty$. \label{corollary::logarthmic_moment_2}
\end{corollary}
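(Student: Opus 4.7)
The plan is to split the corollary into sufficiency and necessity. Sufficiency is immediate: the hypothesis $Q_j(\tau) = \Theta(e^{-\delta_j \tau})$ in particular implies $Q_j(\tau) \leq e^{-\delta \tau}$ eventually for any $\delta < \min_j \delta_j$, so Corollary \ref{corollary::logarthmic_moment} applies verbatim. The substantive content lies in the reverse implication, which I would obtain by strengthening the estimate in Corollary \ref{corollary::logarthmic_moment} into a lower bound for $\EX[W]$ that matches the upper bound used there.

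The first step is to extract uniform constants $c > 0$, $\delta > 0$ and $t_0 > 0$ (take $\delta = \max_j \delta_j$, let $c$ be the minimum across $j$ of the implicit lower constants in the $\Theta$ assumption, and take $t_0$ large enough that every asymptotic holds and $u_0 := c e^{-\delta t_0} < 1$) so that $Q_j(\tau) \geq c e^{-\delta \tau}$ for every $j$ and every $\tau \geq t_0$. Writing $S := S_1 + \cdots + S_J$, the bound $1 - Q_j(\tau) \leq 1 - c e^{-\delta \tau}$ gives
\begin{align*}
\prod_{j=1}^J (1 - Q_j(\tau))^{S_j} \leq (1 - c e^{-\delta \tau})^S,
\end{align*}
which, substituted into Equation (\ref{eq:exp_service_time}) and combined with Tonelli, yields
\begin{align*}
\EX[W] \geq \sum_{n=1}^\infty P(S = n) \int_{t_0}^\infty \big[1 - (1 - c e^{-\delta \tau})^n\big] d\tau.
\end{align*}

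The change of variables $u = c e^{-\delta \tau}$ converts the inner integral into $\delta^{-1} \int_0^{u_0} (1-(1-u)^n)/u \, du$. Applying identity 0.155.4 of \parencite{jeffrey2007table} to the integral over $(0,1)$ as in Corollary \ref{corollary::logarthmic_moment}, and observing that the discarded tail $\int_{u_0}^1 (1-(1-u)^n)/u \, du$ is bounded above by $-\log u_0$ uniformly in $n$, I would obtain
\begin{align*}
\int_0^{u_0} \frac{1 - (1-u)^n}{u} \, du \geq H_n + \log u_0, \qquad H_n := \sum_{k=1}^n \frac{1}{k}.
\end{align*}
Once $n$ is large enough that $H_n \geq 2|\log u_0|$, the right-hand side is at least $H_n/2$, which is comparable to $\log(n+1)$ by Equation (\ref{eq:harmonic}). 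Feeding this back, $\EX[W] < \infty$ forces $\EX[\log(S+1) \mathbbm{1}\{S \geq N_0\}] < \infty$ for some fixed threshold $N_0$, hence $\EX[\log(S+1)] < \infty$, and Theorem \ref{theorem::ergodicity} closes the argument.

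The main obstacle will be preserving the harmonic growth of $H_n$ after truncating the integral at $u_0 < 1$ rather than at $1$; the discarded tail $\int_{u_0}^1$ is only a constant (independent of $n$), so the logarithmic order survives, but the bookkeeping for the finitely many small $n$ for which $H_n + \log u_0$ may be negative requires care. Beyond this, the $\Theta$ hypothesis enters in an essentially symmetric fashion to Corollary \ref{corollary::logarthmic_moment}: the upper bound on $Q_j$ in the previous corollary supplied sufficiency, while the matching lower bound exploited here supplies necessity.
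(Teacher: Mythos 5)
Your argument is correct and follows essentially the same route as the paper: sufficiency is delegated to Corollary \ref{corollary::logarthmic_moment}, and necessity comes from lower-bounding $\EX[W]$ via an exponential lower bound on each $Q_j$, changing variables to reduce the integral to the harmonic sum $H_n$, and invoking Equation (\ref{eq:harmonic}) together with Theorem \ref{theorem::ergodicity}. The only difference is bookkeeping: where you keep the constant $c$ and truncate the $u$-integral at $u_0<1$ (paying an additive $|\log u_0|$ that you must then absorb for large $n$), the paper sidesteps this entirely by enlarging the rate to some $\eta>\delta_j$ so that $Q_j(\tau)\geq e^{-\eta\tau}$ eventually, which lets the substitution run over all of $(0,1]$ and produce $\sum_{k=1}^n \frac{1}{\eta k}$ exactly.
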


\begin{proof}
Sufficiency follows directly from Corollary \ref{corollary::logarthmic_moment}. By assumption, there exists $\eta>\delta_j$ and $t_0 > 0 $ such that
\begin{align*}
    Q_j(\tau) \geq e^{-\eta \tau} \text{ for all } \tau \geq t_0 \text{ and } j \in \{1, \dots, J \}
\end{align*}
Set $S=S_1 + \dots + S_J$. Adopting much the same reasoning as the proof of Corollary \ref{corollary::logarthmic_moment}, we obtain a lower bound for the expected batch occupancy time
\begin{align*}
    \EX[W] + t_0 \geq \int^\infty_{0}  \Big[ 1 - G_\mathbf{S} \big( 1-e^{-\eta \tau}, \dots, 1-e^{-\eta \tau} \big) \Big] d \tau = \sum^\infty_{n=1} P(S=n) \sum^n_{k=1} \frac{1}{\eta k}.
\end{align*}
From Equation (\ref{eq:harmonic}) \parencite{whittaker2020course}, we observe that
\begin{align*}
    \EX[\log(S + 1)] = \infty \implies \sum^\infty_{n=1} P(S=n) \sum^n_{k=1} \frac{1}{\eta k} = \infty \implies \EX[W] = \infty
\end{align*}
so by Theorem \ref{theorem::ergodicity},  $\EX[\log(S+1)] < \infty$ is a necessary condition for ergodicity.
\end{proof}

In a similar vein, we can recover a sufficient condition for ergodicity in the case of fat-tailed customer occupancy time distributions.

\begin{corollary}
Suppose there exists $\alpha>1$ and $t_0>0$ such that $Q_j(t) \leq t^{-\alpha}$ for all $j \in \{1, \dots, J \}$ and $t \geq t_0$. Then the network is ergodic if $\EX[(S_1 + \dots + S_J)^{1/\alpha}] < \infty$. 
\end{corollary}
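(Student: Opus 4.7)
The plan is to follow the template established in Corollaries~\ref{corollary::logarthmic_moment} and~\ref{corollary::logarthmic_moment_2}, adapting the tail estimate to the polynomial setting. Since $G_\mathbf{S}$ is coordinatewise non-decreasing on $[0,1]^J$ and $Q_j(\tau) \leq \tau^{-\alpha}$ for $\tau \geq t_1 := \max(t_0,1)$, monotonicity yields
\begin{align*}
1 - G_\mathbf{S}\big(1-Q_1(\tau),\dots,1-Q_J(\tau)\big) \leq 1 - G_\mathbf{S}\big(1-\tau^{-\alpha},\dots,1-\tau^{-\alpha}\big)
\end{align*}
for $\tau \geq t_1$. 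Setting $S := S_1+\dots+S_J$, the right-hand side equals $\sum_{n=1}^\infty P(S=n)\big[1-(1-\tau^{-\alpha})^n\big]$, exactly as in the proof of Corollary~\ref{corollary::logarthmic_moment}. Together with Equation~(\ref{eq:exp_service_time}) and the trivial bound $1-G_\mathbf{S}(\cdot) \leq 1$ on $[0,t_1]$, this reduces ergodicity (via Theorem~\ref{theorem::ergodicity}) to showing that $\sum_{n=1}^\infty P(S=n) \int_{t_1}^\infty \big[1-(1-\tau^{-\alpha})^n\big]\,d\tau$ is finite.

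The essential step is a polynomial analogue of the harmonic-sum identity~(\ref{eq:harmonic}). I would apply Bernoulli's inequality to obtain the pointwise bound $1 - (1-\tau^{-\alpha})^n \leq \min(1, n\tau^{-\alpha})$ and split the integral at the crossover point $\tau = n^{1/\alpha}$, giving
\begin{align*}
\int_{t_1}^\infty \big[1-(1-\tau^{-\alpha})^n\big]\,d\tau \leq \int_{t_1}^{n^{1/\alpha}} 1\,d\tau + \int_{n^{1/\alpha}}^\infty n\tau^{-\alpha}\,d\tau \leq \frac{\alpha}{\alpha-1}\,n^{1/\alpha},
\end{align*}
with the first integral interpreted as $0$ when $n^{1/\alpha} < t_1$. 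The hypothesis $\alpha>1$ enters only here, to keep the second integral finite. Applying Tonelli to swap summation and integration then yields $\EX[W] \leq t_1 + \frac{\alpha}{\alpha-1}\,\EX[S^{1/\alpha}]$, which is finite by hypothesis.

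The calculation is essentially routine --- no contour integration or complex-analytic machinery is required --- so the main (mild) obstacle is simply choosing the correct split point. The exponent $1/\alpha$ in $\tau = n^{1/\alpha}$ is precisely what balances the two pieces of the integral, and it is no accident that it matches the moment $\EX[S^{1/\alpha}]$ appearing in the hypothesis: heuristically, the integrand $1 - (1-\tau^{-\alpha})^n \approx 1 - \exp(-n\tau^{-\alpha})$ transitions from $\approx 1$ to $\approx 0$ near $\tau \sim n^{1/\alpha}$, so its integral scales like $n^{1/\alpha}$.
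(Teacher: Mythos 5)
Your proposal is correct, and it diverges from the paper's proof at the key computational step. Both arguments begin identically: bound the integrand in Equation~(\ref{eq:exp_service_time}) by $1$ on an initial interval, use monotonicity of $G_\mathbf{S}$ to replace each $Q_j(\tau)$ by $\tau^{-\alpha}$, and reduce to showing $\sum_n P(S=n)\int \big[1-(1-\tau^{-\alpha})^n\big]\,d\tau<\infty$. From there the paper evaluates the inner integral \emph{exactly}: a change of variables $u=1-\tau^{-\alpha}$ turns it into a sum of beta functions, $\Gamma(1-\tfrac{1}{\alpha})\sum_{k=0}^{n-1}\Gamma(k+1)/\Gamma(k+2-\tfrac{1}{\alpha})$, and Stirling-type asymptotics for the gamma ratio ($\sim k^{1/\alpha-1}$) together with a summation-by-parts rearrangement show the resulting series converges if and only if $\EX[S^{1/\alpha}]<\infty$. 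You instead bound the integrand by $\min(1,n\tau^{-\alpha})$ via Bernoulli's inequality and split at $\tau=n^{1/\alpha}$, obtaining the clean explicit estimate $\int_{t_1}^\infty\big[1-(1-\tau^{-\alpha})^n\big]\,d\tau\leq\frac{\alpha}{\alpha-1}n^{1/\alpha}$ (your arithmetic checks out, and you correctly note that $\alpha>1$ is needed only for the tail integral). Your route is more elementary --- no special functions, no asymptotic analysis --- and yields a quantitative bound $\EX[W]\leq t_1+\frac{\alpha}{\alpha-1}\EX[S^{1/\alpha}]$. What the paper's exact evaluation buys is a two-sided statement: its intermediate quantity $Z$ is finite \emph{if and only if} $\EX[S^{1/\alpha}]<\infty$, which is the natural starting point if one wanted to upgrade to a necessary-and-sufficient condition under a matching lower bound on $Q_j$ (as is done for the exponential case in Corollary~\ref{corollary::logarthmic_moment_2}); your one-sided inequality cannot be reversed. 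For the sufficiency claim actually being proved, the two approaches are equally conclusive.
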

\begin{proof}
Using similar reasoning to Corollary \ref{corollary::logarthmic_moment}, we obtain the lower bound 
\begin{align*}
    \EX[W] \leq t_0 + \int^\infty_{t_0} \sum^\infty_{n=0} P(S=n) \big[ 1 - (1-t^{-\alpha})^n \big] d \tau.
\end{align*}
Performing a change of variables $u = 1-t^{-\alpha}$, we find that the condition
\begin{align}
    Z:= \sum^\infty_{n=0} P(S=n) \int^1_0 \Bigg[ \sum^{n-1}_{i=0} (1-u)^{-\frac{1}{\alpha}} u^i \Bigg] du < \infty \label{eq:fat_tail_intermediate_1}
\end{align}
is sufficient to ensure $\EX[W]<\infty$, where the interchanging of the order of summation and integration is justified by Tonelli's theorem. Using the integral representation of the beta function and identity 8.384.1 of \parencite{jeffrey2007table}, we compute
\begin{align}
     \int^1_0 \Bigg[ \sum^{n-1}_{i=0} (1-u)^{-1/\alpha}  u^i  \Bigg]d u = \sum^{n-1}_{k=0} B \Big(1-\frac{1}{\alpha}, k+1 \Big) = \Gamma \Big( 1 - \frac{1}{\alpha} \Big) \sum^{n-1}_{k=0} \frac{\Gamma(k+1)}{\Gamma \Big( k + 2 - \frac{1}{\alpha} \Big)}. \label{eq:fat_tail_intermediate_2}
\end{align}
Substituting Equation (\ref{eq:fat_tail_intermediate_2}) in  (\ref{eq:fat_tail_intermediate_1}) and interchanging the order of summation (which is justified by Tonelli's theorem) yields
\begin{align*}
    Z = \Gamma \Big( 1 - \frac{1}{\alpha} \Big) \sum^{\infty}_{k=0} \frac{\Gamma(k+1)}{\Gamma \Big( k + 2 - \frac{1}{\alpha} \Big)} P(S \geq k + 1).
\end{align*}
By identity 6.1.46 of \parencite{abramowitz1988handbook}
\begin{align*}
    \frac{\Gamma(k+1)}{\Gamma \Big( k + 2 - \frac{1}{\alpha} \Big)} \sim k^{\frac{1}{\alpha}-1} \text{ in the limit } k \to \infty.
\end{align*}
Therefore,
\begin{align*}
    Z < \infty \iff  \sum^{\infty}_{k=1} k^{\frac{1}{\alpha}-1} P(S \geq k + 1) = \sum^{\infty}_{n=2} P(S=n) \sum^{n-1}_{k=1} k^{\frac{1}{\alpha}-1}  < \infty.
\end{align*}
where we have interchanged the order of summation. Noting that
\begin{align*}
    \sum^{n}_{k=1} k^{\frac{1}{\alpha}-1} \sim \frac{1}{\alpha} n^\frac{1}{\alpha} \text{ in the limit } n \to \infty
\end{align*}
(identity 0.121 of \parencite{jeffrey2007table}), we further deduce that
\begin{align*}
    \EX\big[ S^\frac{1}{\alpha} \big] < \infty \iff \sum^{\infty}_{n=1} P(S=n) \sum^n_{k=1} k^{\frac{1}{\alpha}-1}  < \infty.
\end{align*}
Therefore, $\EX \big[ S^\frac{1}{\alpha} \big] < \infty \implies \EX[W]<\infty$ and the claim follows from Theorem \ref{theorem::ergodicity}.
\end{proof}

\section{Recurrence relations for the PMF of the transient occupancy distribution} \label{sec::recurrence_relations}

For a single batch arrival ($\text{M}^\text{X}/\text{G}/\infty$) queue with a homogeneous arrival process, \textcite{willmot2001transient, willmot2002transient, willmot2009time} recognised the PGF governing the number of customers at a fixed time $t$ as that of a compound Poisson distribution. They observed that application of the Panjer-Adleson recursion scheme \parencite{panjer1981recursive, tijms2003first} yields recurrence relations for the PMF of the time-dependent queue length distribution, formulated with respect to the PMF of a distribution obtained by compounding the batch size with a binomial distribution. Using the transient PGF given by Equation (\ref{eq::queue_pgf_general}), we can likewise recover a recurrence relation for the PMF of the time-dependent occupancy distribution, formulated in terms of the PMF of $\mathbf{C}(t)$ (Equation (\ref{eq:cj})), using an analogous argument to \textcite{panjer1981recursive}.

\begin{theorem}{(A recurrence relation for the multivariate PMF.)} \label{theorem:recurrence_relation}\newline
For $v \in \{2, \dots, J \}$ such that $n_v \geq 1$,
\begin{align}
    P \big[ & \mathbf{N}(t) = (n_1, \dots, n_v, 0, \dots 0) \big] \notag\\
    & = \sum^{n_1}_{i_1 = 0} \dots \sum^{n_{v-1}}_{i_{v-1} = 0} \sum^{n_{v}}_{i_{v} = 1} \Bigg[ \frac{i_v}{n_v} \cdot  P \big[ \mathbf{N}(t) = (n_1-i_1, \dots, n_v-i_v, 0, \dots, 0) \big] \notag \\
    & \qquad \qquad \cdot \int^t_0 \lambda(\tau) \cdot P \big[ \mathbf{C}(t-\tau) = ( i_1, \dots, i_v, 0, \dots, 0) \big] d \tau \Bigg]. \label{eq::transient_recurrence_relation}
\end{align}
\end{theorem}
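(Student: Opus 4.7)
The plan is to adapt the Panjer--Adelson recursion for compound Poisson laws to our multivariate non-homogeneous setting, working directly with the transient PGF. The starting point is equation~(\ref{eq::queue_pgf_general_compound_dist}), which we rewrite as
\begin{align*}
\phi_t(\mathbf{z}) = \exp\bigg\{-\int_0^t \lambda(\tau)\big[1 - \Psi_\tau(\mathbf{z})\big]\,d\tau\bigg\}, \qquad \Psi_\tau(\mathbf{z}) := \EX\bigg[\prod_{k=1}^J z_k^{C_k(t-\tau)}\bigg].
\end{align*}
First I would differentiate both sides with respect to $z_v$ to obtain
\begin{align*}
\frac{\partial \phi_t}{\partial z_v}(\mathbf{z}) = \phi_t(\mathbf{z}) \cdot \int_0^t \lambda(\tau)\,\frac{\partial \Psi_\tau}{\partial z_v}(\mathbf{z})\,d\tau,
\end{align*}
and then multiply both sides by $z_v$. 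The point of the final multiplication is that the operator $z_v\,\partial/\partial z_v$ acts on a monomial $\prod_j z_j^{n_j}$ as multiplication by the exponent $n_v$; this will supply the factor $n_v$ that ends up as the denominator in the final formula.

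The key step is then to equate coefficients of $\prod_j z_j^{n_j}$ on both sides, restricting to $\mathbf{n}=(n_1,\dots,n_v,0,\dots,0)$. Writing $\phi_t$ in terms of the law of $\mathbf{N}(t)$ and $\Psi_\tau$ in terms of the law of $\mathbf{C}(t-\tau)$, the left-hand side contributes $n_v\,P[\mathbf{N}(t)=\mathbf{n}]$, while the Cauchy product on the right-hand side yields a sum over index vectors $\mathbf{i}$ with $i_j \le n_j$ for all $j$. The restriction $n_{v+1}=\dots=n_J=0$ automatically forces $i_{v+1}=\dots=i_J=0$ (otherwise the complementary argument $\mathbf{n}-\mathbf{i}$ in $P[\mathbf{N}(t)=\mathbf{n}-\mathbf{i}]$ would be infeasible), and the factor $i_v$ produced by the differentiation kills the $i_v=0$ terms. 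Dividing through by $n_v$ and using Fubini--Tonelli to pull the resulting finite sum inside the integral in $\tau$ delivers equation~(\ref{eq::transient_recurrence_relation}) exactly.

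The main obstacle I expect is justifying these formal manipulations cleanly---in particular, the interchange of $\partial/\partial z_v$ with the integral over $\tau$ that defines $\phi_t$, and the termwise differentiation and Cauchy multiplication of the power series representations of $\phi_t$ and $\Psi_\tau$. Both concerns are handled by the analyticity of the PGFs on the open $J$-dimensional polydisc $\Omega$ (already exploited in the proof of Theorem~\ref{theorem::ergodicity}) and the uniform convergence of the associated series on compact subsets, which ensures that coefficient extraction, differentiation and integration may all be interchanged in a neighbourhood of the origin. Beyond this, the argument is essentially an accounting exercise with formal power series.
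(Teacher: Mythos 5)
Your argument is correct, but it is not the route the paper's proof actually takes. The paper works at the level of probability mass functions: it represents $P[\mathbf{N}(t)=\mathbf{n}]$ as a compound Poisson mixture $\sum_m \frac{e^{-\Lambda(t)}\Lambda(t)^m}{m!}f^{(m)}(\mathbf{n})$ of $m$-fold convolutions of the per-arrival occupancy PMF $f(i_1,\dots,i_J)=\int_0^t\frac{\lambda(\tau)}{\Lambda(t)}P[\mathbf{C}(t-\tau)=\mathbf{i}]\,d\tau$, and then invokes Panjer's symmetry identity (Relation II), namely that $\sum_{\mathbf{i}}\frac{i_v}{n_v}f(\mathbf{i})f^{(m-1)}(\mathbf{n}-\mathbf{i})=f^{(m)}(\mathbf{n})/m$, i.e.\ the conditional mean of one of $m$ exchangeable summands given their total $\mathbf{n}$ is $n_v/m$; re-indexing the Poisson sum then gives the recurrence directly, with no differentiation and no analytic justification needed beyond absolute convergence of a nonnegative double series. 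Your approach --- applying $z_v\,\partial/\partial z_v$ to the exponential form of $\phi_t$ and extracting coefficients from the resulting Cauchy product --- is precisely the alternative derivation the paper alludes to in its closing remark (``can also be obtained by computing partial derivatives of the multivariate PGF''), and your accounting of the coefficient extraction is right: the restriction $n_{v+1}=\dots=n_J=0$ does force the corresponding $i_j$ to vanish, and the factor $i_v$ removes the $i_v=0$ terms. The trade-off is that your route requires the analytic care you flag (differentiation under the $\tau$-integral, termwise differentiation of the power series), which is indeed handled by analyticity and local uniform convergence on the open polydisc, since $|\partial_{z_v}\Psi_\tau(\mathbf{z})|$ is bounded uniformly in $\tau$ on compact subsets; in exchange it is more mechanical and extends immediately to, e.g., the factorial-moment recurrences the paper mentions, whereas the paper's convolution argument is purely probabilistic and avoids these justifications at the cost of importing the Panjer symmetry lemma.
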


\begin{proof}
Following the proof of Theorem \ref{theorem::open_network_queue_pgf}, given that there are $M(t)=m$ arrivals in the interval $[0, t)$, the PMF for the number of customers in each queue $1, \dots, J$ at time $t$ attributable to each arrival event is i.i.d. with PMF
\begin{align*}
    f(i_1, \dots, i_J) = \int^t_0 \frac{\lambda(\tau)}{\Lambda(t)} P \big[ \mathbf{C}(t-\tau) = (i_1, \dots, i_J) \big] d \tau.
\end{align*}

Denote by $f^{(m)}$ the $m$-fold convolution of $f$. Then by the law of total probability, the PMF of $\mathbf{N}(t)$ can be written in the form
\begin{align}
    P \big[ \mathbf{N}(t) = (n_1, \dots, n_J) \big] &= \sum^\infty_{m=0} \frac{e^{-\Lambda(t)} \Lambda(t)^m}{m!} f^{(m)}(n_1, \dots, n_J). \label{eq::compound_poisson_pmf}
\end{align}
Now, by symmetry,
\begin{align}
    \sum^{n_1}_{i_1=0} \dots \sum^{n_J}_{i_J=0} \frac{i_v}{n_v} f(i_1, \dots, i_J) \cdot f^{(m-1)}(n_1-i_1, \dots, n_J-i_J) = \frac{f^{(m)}(n_1, \dots, n_J)}{m}; \label{eq::symmetry_convolution}
\end{align}
that is, given that the sum of $m$ i.i.d. random vectors, each with PMF $f$, is $(n_1, \dots, n_J)$, the conditional mean of the $v^\text{th}$ element is $n_v/m$ (Relation II of \textcite{panjer1981recursive}).\\

Using Equations (\ref{eq::compound_poisson_pmf}) and (\ref{eq::symmetry_convolution}), for $(n_1, \dots, n_J) \neq \mathbf{0}$, we obtain the expression
\begin{align}
     P \big[ \mathbf{N}(t) = (n_1, \dots, n_J) \big] &=  \sum^{n_1}_{i_1 = 0} \dots \sum^{n_J}_{i_J=0} \frac{i_v}{n_v} f(i_1, \dots, i_j) \sum^\infty_{m=1} \frac{e^{-\Lambda(t)} \Lambda(t)^{m}}{(m-1)!} f^{(m-1)}(n_1 - i_1, \dots, n_J - i_J) \notag \\
     &= \sum^{n_1}_{i_1 = 0} \dots \sum^{n_J}_{i_J=0} \frac{i_v}{n_v} \Lambda(t) f(i_1, \dots, i_j) P \big[ \mathbf{N}(t) = (n_1-i_1, \dots, n_J-i_J) \big] \label{eq::prelim_recurrence_relation}
\end{align}
where interchanging the order of summation is justified since the series is absolutely convergent. Sequential application of Equation (\ref{eq::prelim_recurrence_relation}) along the lower triangular integer lattice yields precisely the recurrence relation (\ref{eq::transient_recurrence_relation}). The expression (\ref{eq::transient_recurrence_relation}) can also be obtained by computing partial derivatives of the multivariate PGF (\ref{eq::queue_pgf_general}), using the formulae provided in \textcite{miatto2019recursive}.
\end{proof}

Through an application of Faa di Bruno's formula, we noted in \textcite{mehra2022hypnozoite} that the transient distribution for the $\text{M}_\text{t}^\text{X}/\text{G}/\infty$ queue can also be formulated in terms of Bell polynomials \parencite{comtet2012advanced}, which yield an analogous recurrence structure. For queueing networks with multivariate batch arrivals, alternative representations of the transient PMF in terms of multivariate Bell polynomials \parencite{schumann2019multivariate} can also be recovered.\\

Likewise, we can derive recurrence relations for the joint factorial moments of $\mathbf{N}(t)$, formulated with respect to those of $\mathbf{C}(t)$.

\subsection{Some special cases}

The recurrence relations derived in Theorem \ref{theorem::open_network_queue_pgf} are formulated in terms of the PMF for the distribution of $\mathbf{C}(t)$. The tractability of these formulae are therefore constrained by the multivariate PMF for $\mathbf{C}(t)$. Here, we provide explicit formulae for several special cases.

\subsubsection{Univariate batch sizes} \label{sec::sundt_jewel_batch}

For $\text{M}^\text{X}/\text{G}/\infty$ queues, \textcite{willmot2001transient, willmot2002transient} observed that batch sizes within `Sundt and Jewell's family of discrete distributions' \parencite{sundt1981further, willmot1988sundt} may yield tractable formulae. This class of distributions has a probability mass function that is characterised by a recurrence relation of the form
\begin{align*}
    P(S=n) = P(S=n-1) \cdot \Big( a + \frac{b}{n} \Big)
\end{align*}
for fixed $a, b$. It was shown in in Theorem 1 of \textcite{willmot1988sundt} that this class includes Poisson, negative binomial and logarithmic distributions, and zero-inflated analogues thereof.\\

In the context of queueing networks, we restrict our attention to multvariate batch size distributions $\mathbf{S}$ that can be expressed in the form
\begin{align*}
    \EX \Bigg[ \prod^J_{j=1} z_j^{S_j} \Bigg] = G_S \Bigg( \sum^J_{j=1} p_j z_j  \Bigg)
\end{align*}
where $S$ is a univariate random variable within Sundt and Jewell's family \parencite{sundt1981further, willmot1988sundt} and $p_j \geq 0$ with $\sum_j p_j=1$. This corresponds to a univariate batch, with each incoming customer independently assigned an entry point $j \in \{1, \dots, J\}$. For notational convenience, we set
\begin{align*}
    q_k(t) = \sum^{J}_{j=1} p_j \cdot q_k^j(t)
\end{align*}
to be the probability that a customer is situated in queue $k$ time $t$ after their arrival into the network. In the corollaries below, we state explicit formulae for binomial, Poisson, negative binomial and logarithmic batch sizes.

\begin{corollary} \label{corollary::binomial}
Suppose $S \sim \text{Binomial}(N, \alpha)$. Then for $v \in \{2, \dots, J \}$ and $n_v \geq 1$
\begin{align*}
    P \big[ & \mathbf{N}(t) = (n_1, \dots, n_v, 0, \dots 0 ) \big]\\
    & = \sum^{n_1}_{i_1 = 0} \dots \sum^{n_{v-1}}_{i_{v-1} = 0} \sum^{n_{v}}_{i_{v} = 1} \Bigg[ \frac{i_v}{n_v} \cdot  P\big[ \mathbf{N}(t) = ( n_1-i_1, \dots, n_v-i_v, 0, \dots 0) \big] \cdot \mathbbm{1}_{\sum^v_{j=1} i_j \leq N} \\
    & \qquad \qquad \cdot \int^t_0 \lambda(\tau) \frac{N!}{\big( N - \sum^v_{k=1} i_k \big)!} \Big( 1 - \alpha \sum^v_{k=1} q_k(t-\tau) \Big)^{N-\sum^v_{k=1} i_k} \prod^v_{k=1} \frac{  ( \alpha q_k(t-\tau))^{i_k}}{i_k!} d \tau \Bigg].
\end{align*}
\end{corollary}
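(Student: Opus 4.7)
The plan is to specialise Theorem \ref{theorem:recurrence_relation} by computing $P[\mathbf{C}(t-\tau) = (i_1, \dots, i_v, 0, \dots, 0)]$ explicitly under the univariate-batch assumption with $S \sim \text{Binomial}(N, \alpha)$, and substituting the result directly into the recurrence relation (\ref{eq::transient_recurrence_relation}).

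The first step is to reduce the PGF of $\mathbf{C}(t)$. Under the univariate-batch structure described in Section \ref{sec::sundt_jewel_batch}, the joint PGF factors as $G_\mathbf{S}(w_1, \dots, w_J) = G_S\bigl(\sum_j p_j w_j\bigr)$, so that Equation (\ref{eq::compound_queue_PGF}) collapses to
\begin{align*}
\EX\Bigg[\prod_{k=1}^J z_k^{C_k(t)}\Bigg] = G_S\Bigg(1 + \sum_{k=1}^J (z_k-1)\,q_k(t)\Bigg),
\end{align*}
where $q_k(t) = \sum_j p_j q^j_k(t)$ is the pooled location probability defined in the corollary.

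The second step is to read off the multinomial form. With $G_S(u) = (1-\alpha + \alpha u)^N$, the preceding display becomes
\begin{align*}
\EX\Bigg[\prod_{k=1}^J z_k^{C_k(t)}\Bigg] = \Bigg(1 - \alpha\sum_{k=1}^J q_k(t) + \alpha \sum_{k=1}^J q_k(t)\, z_k\Bigg)^N,
\end{align*}
which I recognise as the PGF of a multinomial distribution on $N$ trials with cell probabilities $\alpha q_1(t), \dots, \alpha q_J(t)$ and residual mass $1 - \alpha \sum_k q_k(t)$. Reading off the PMF and specialising to the configuration $(i_1, \dots, i_v, 0, \dots, 0)$ yields, for $\sum_{k=1}^v i_k \le N$,
\begin{align*}
P\bigl[\mathbf{C}(t-\tau) = (i_1, \dots, i_v, 0, \dots, 0)\bigr] = \frac{N!}{\bigl(N - \sum_{k=1}^v i_k\bigr)!} \Bigl(1 - \alpha \sum_{k=1}^v q_k(t-\tau)\Bigr)^{N - \sum_{k=1}^v i_k} \prod_{k=1}^v \frac{(\alpha q_k(t-\tau))^{i_k}}{i_k!},
\end{align*}
and zero otherwise; the truncation $\sum_{k=1}^v i_k \le N$ is encoded by the indicator $\mathbbm{1}_{\sum_{k=1}^v i_k \le N}$ appearing in the corollary's statement.

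The final step is pure substitution: inserting this expression into the integrand of Equation (\ref{eq::transient_recurrence_relation}) gives exactly the claimed formula, so no analytic subtlety is involved. The only point to verify carefully is that the conversion from the multivariate PGF evaluated at the special argument $(1 + (z_k-1)q_k(t))_{k=1}^J$ to a genuine multinomial PMF is valid, which it is because the cell probabilities $\alpha q_k(t)$ are non-negative and sum to at most one. Since the argument is essentially algebraic identification, there is no real obstacle; the main task is notational housekeeping to match the form stated in the corollary.
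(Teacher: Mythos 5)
Your strategy --- reducing the PGF of $\mathbf{C}(t)$ to $G_S\big(1 + \sum_{k}(z_k-1)q_k(t)\big)$ under the univariate-batch assumption, recognising the multinomial structure for $S \sim \text{Binomial}(N,\alpha)$, and substituting the resulting PMF into the recurrence relation (\ref{eq::transient_recurrence_relation}) --- is exactly the intended derivation (the paper states this corollary without a separate proof), and your first two steps are correct.

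There is, however, an internal inconsistency in your final step. You correctly identify the residual cell of the multinomial as having mass $1 - \alpha\sum_{k=1}^{J} q_k(t)$, with the sum running over all $J$ queues: a trial not counted in any of $C_1,\dots,C_J$ is a customer who either never materialised (the Bernoulli thinning) or has already left the network. The event $\{\mathbf{C}(t-\tau) = (i_1,\dots,i_v,0,\dots,0)\}$ appearing in the recurrence pins $C_{v+1}=\dots=C_J=0$, so reading off the multinomial PMF consistently with your own identification gives
\begin{align*}
\frac{N!}{\big(N-\sum_{k=1}^v i_k\big)!}\Big(1-\alpha\sum_{k=1}^{J} q_k(t-\tau)\Big)^{N-\sum_{k=1}^v i_k}\prod_{k=1}^v \frac{(\alpha q_k(t-\tau))^{i_k}}{i_k!},
\end{align*}
with the base of the power summing to $J$, not $v$. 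The expression you actually wrote, with $1-\alpha\sum_{k=1}^{v} q_k(t-\tau)$, is the marginal probability $P[C_1=i_1,\dots,C_v=i_v]$, which does not force queues $v+1,\dots,J$ to be empty; the two coincide only if $q_k(t-\tau)=0$ for all $k>v$. Your display matches the corollary as printed, but the analogous negative-binomial and logarithmic corollaries both carry $\sum_{k=1}^{J} q_k(t-\tau)$ in the corresponding position, which strongly suggests the $\sum_{k=1}^{v}$ in this statement is a typo that a consistent execution of your (otherwise correct) argument would have caught rather than reproduced.
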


\begin{corollary}
Suppose $S \sim \text{Poisson}(\mu)$. Then for $v \in \{2, \dots, J \}$ and $n_v \geq 1$
\begin{align*}
    P \big[ & \mathbf{N}(t) = (n_1 , \dots, n_v, 0, \dots 0 ) \big]\\
    & = \sum^{n_1}_{i_1 = 0} \dots \sum^{n_{v-1}}_{i_{v-1} = 0} \sum^{n_{v}}_{i_{v} = 1} \Bigg[ \frac{i_v}{n_v} \cdot  P \big[ \mathbf{N}(t) = (n_1-i_1, \dots, n_v-i_v, 0, \dots, 0 ) \big] \cdot \int^t_0 \lambda(\tau) \bigg[ \prod^v_{k=1} \frac{(q_k(t-\tau) \mu)^{i_k}}{i_k!} \bigg] e^{-\mu}  d \tau \Bigg].
\end{align*}
\end{corollary}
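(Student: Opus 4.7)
The plan is to specialise the general recurrence of Theorem \ref{theorem:recurrence_relation} to the univariate Poisson batch setting; since that theorem already supplies the skeleton of the formula, the only nontrivial step is to identify the PMF $P[\mathbf{C}(t-\tau) = (i_1, \dots, i_v, 0, \dots, 0)]$ explicitly in the special case at hand.

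First I specialise the PGF of $\mathbf{C}(t)$ from Equation (\ref{eq::compound_queue_PGF}) to the univariate batch model, where $G_\mathbf{S}(z_1, \dots, z_J) = G_S\left( \sum_j p_j z_j \right)$. Writing $q_k(t) = \sum_j p_j q^j_k(t)$ collapses the multivariate argument to a single scalar, yielding $\EX\left[\prod_k z_k^{C_k(t)}\right] = G_S\left( 1 + \sum_k (z_k-1) q_k(t) \right)$. For $S \sim \text{Poisson}(\mu)$ we have $G_S(z) = e^{\mu(z-1)}$, and the resulting exponential factorises across queues:
\begin{align*}
    \EX\Bigg[ \prod_{k=1}^J z_k^{C_k(t)} \Bigg] = \exp\Bigg( \mu \sum_{k=1}^J (z_k - 1) q_k(t) \Bigg) = \prod_{k=1}^J \exp\big( \mu q_k(t) (z_k - 1) \big).
\end{align*}
This identifies $C_1(t), \dots, C_J(t)$ as independent Poisson random variables with means $\mu q_k(t)$, a fact also immediate from Poisson splitting applied to a Poisson$(\mu)$ batch whose members are independently allocated amongst the $J$ queues.

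Reading off the joint PMF at $(i_1, \dots, i_v, 0, \dots, 0)$ then produces a product of univariate Poisson point-probabilities for the first $v$ coordinates, multiplied by the exponential normalising factor that consolidates the contributions of the zero-valued trailing components. Substituting this into the recurrence of Theorem \ref{theorem:recurrence_relation} and integrating against $\lambda(\tau)$ yields the stated identity directly. I do not foresee any genuine obstacle; the proof is a one-step specialisation of Theorem \ref{theorem:recurrence_relation}, and the only modest care required is in correctly assembling the exponential normaliser that collects contributions across all $J$ coordinates of $\mathbf{C}(t-\tau)$.
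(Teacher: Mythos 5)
Your approach is exactly the intended one: the paper states this corollary without proof, as a direct specialisation of Theorem \ref{theorem:recurrence_relation}, and your key step --- factorising $G_S\big(1+\sum_k(z_k-1)q_k(t)\big)$ for $G_S(z)=e^{\mu(z-1)}$ to identify $C_1(t),\dots,C_J(t)$ as independent Poisson random variables with means $\mu q_k(t)$ (equivalently, Poisson splitting of the batch over entry points and subsequent positions) --- is precisely what is needed to read off $P[\mathbf{C}(t-\tau)=(i_1,\dots,i_v,0,\dots,0)]$ and insert it into the recurrence.

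One point deserves more care, and it is exactly the ``exponential normaliser'' you defer to the end. Your own (correct) computation gives
\[
P\big[\mathbf{C}(t-\tau)=(i_1,\dots,i_v,0,\dots,0)\big]=\prod_{k=1}^{v}\frac{\big(\mu q_k(t-\tau)\big)^{i_k}}{i_k!}\; e^{-\mu\sum_{k=1}^{J}q_k(t-\tau)},
\]
whereas the stated identity carries the constant factor $e^{-\mu}$. These coincide only if $\sum_{k=1}^{J}q_k(t-\tau)=1$, i.e.\ if no customer can have left the network by time $t-\tau$ after arrival; in general $\sum_{k=1}^{J}q_k(\cdot)\le 1$ with strict inequality possible (cf.\ the definition of $Q_j$ in Section \ref{sec::stability_cond}). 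Note that the adjacent negative binomial and logarithmic corollaries both retain $\sum_{k=1}^{J}q_k(t-\tau)$ in the analogous position, so the $e^{-\mu}$ in the statement is evidently a typo for $e^{-\mu\sum_{k=1}^{J}q_k(t-\tau)}$. Your derivation is sound and in fact produces the correct formula, but you should not claim that it ``yields the stated identity directly'': you need either to record the corrected normaliser or to state the extra hypothesis under which the two agree.
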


\begin{corollary}
Suppose $S$ follows a negative binomial distribution with PGF
\begin{align*}
    G(z) := \EX \big[ z^S ] = \frac{1}{\big[ 1 + \nu(1-z) \big]^r}.
\end{align*}
Then for $v \in \{2, \dots, J \}$ and $n_v \geq 1$
\begin{align*}
    P \big[ & \mathbf{N}(t) = (n_1, \dots, n_v, 0, \dots 0 ) \big]\\
    & = \sum^{n_1}_{i_1 = 0} \dots \sum^{n_{v-1}}_{i_{v-1} = 0} \sum^{n_{v}}_{i_{v} = 1} \Bigg[ \frac{i_v}{n_v} \cdot  P \big[ \mathbf{N}(t) = (n_1-i_1, \dots, n_v-i_v, 0, \dots 0) \big] \\
    & \qquad \qquad \cdot \int^t_0 \lambda(\tau) \frac{\Gamma \big(r + \sum^v_{k=1} i_k \big)}{\Gamma(r) \prod^v_{k=1} i_k!} \frac{\nu^{\sum^v_{k=1} i_k}}{\big[ 1 + \nu \sum^J_{k=1} q_k(t-\tau) \big]^{r+\sum^v_{k=1} i_k}} \prod^v_{k=1} q_k(t-\tau)^{i_k} d \tau \Bigg].
\end{align*}
\end{corollary}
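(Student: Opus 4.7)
The plan is to invoke Theorem \ref{theorem:recurrence_relation}, which already supplies the recurrence structure; all that remains is to compute the PMF of $\mathbf{C}(t-\tau)$ explicitly for a negative binomial univariate batch, and then plug it in.

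First I would specialise the PGF of $\mathbf{C}(t)$ to the univariate batch setting of Section \ref{sec::sundt_jewel_batch}. Under the assumption $G_\mathbf{S}(z_1,\dots,z_J) = G_S\!\left(\sum_j p_j z_j\right)$, a straightforward substitution into Equation (\ref{eq::compound_queue_PGF}) gives
\begin{align*}
\EX\!\left[\prod_{k=1}^J z_k^{C_k(t)}\right] = G_S\!\left(1 + \sum_{k=1}^J (z_k-1) q_k(t)\right),
\end{align*}
where $q_k(t) = \sum_j p_j q_k^j(t)$ as defined in the preamble to the corollaries.

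Next, for $G_S(z) = [1+\nu(1-z)]^{-r}$, I would rearrange the argument to obtain
\begin{align*}
\EX\!\left[\prod_{k=1}^J z_k^{C_k(t)}\right] = \Big[ 1 + \nu \textstyle\sum_{k=1}^J q_k(t) - \nu \sum_{k=1}^J z_k\, q_k(t) \Big]^{-r},
\end{align*}
and extract the coefficient of $\prod_k z_k^{i_k}$ via the generalised negative binomial expansion
\begin{align*}
\Big[1 - \textstyle\sum_k y_k\Big]^{-r} = \sum_{i_1,\dots,i_J \geq 0} \frac{\Gamma(r + \sum_k i_k)}{\Gamma(r)\,\prod_k i_k!} \prod_k y_k^{i_k},
\end{align*}
applied after factoring out $[1+\nu \sum_k q_k(t)]^{-r}$. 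This yields the closed form
\begin{align*}
P\big[\mathbf{C}(t) = (i_1,\dots,i_J)\big] = \frac{\Gamma\!\big(r+\sum_{k=1}^J i_k\big)}{\Gamma(r) \prod_{k=1}^J i_k!}\, \frac{\nu^{\sum_k i_k} \prod_{k=1}^J q_k(t)^{i_k}}{\big[1 + \nu \sum_{k=1}^J q_k(t)\big]^{r+\sum_k i_k}}.
\end{align*}

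Finally I would restrict to the slice $i_{v+1}=\dots=i_J=0$ (the factors $q_k(t)^{0} = 1$ and $i_k! = 1$ disappear, and the sums in the exponents truncate to $\sum_{k=1}^v i_k$, though the denominator retains $\sum_{k=1}^J q_k(t)$) and substitute this expression for $P[\mathbf{C}(t-\tau)=(i_1,\dots,i_v,0,\dots,0)]$ directly into the recurrence (\ref{eq::transient_recurrence_relation}) of Theorem \ref{theorem:recurrence_relation}, producing the stated formula. The only genuine computational step is the coefficient extraction via the negative-binomial multinomial expansion; the rest is verification that the univariate-batch construction reduces the multivariate $G_\mathbf{S}$ to a single-variable function evaluated at an affine combination of the $z_k$'s, which is purely formal.
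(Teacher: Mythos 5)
Your proposal is correct and follows exactly the route the paper intends: the paper states this corollary without an explicit proof, the implicit argument being precisely your specialisation of the recurrence in Theorem \ref{theorem:recurrence_relation} with the PMF of $\mathbf{C}(t-\tau)$ obtained by coefficient extraction from $G_S\big(1+\sum_k(z_k-1)q_k(t)\big)$. Your negative-binomial multinomial expansion yields the stated coefficient, including the point that the denominator keeps $\sum_{k=1}^{J} q_k(t-\tau)$ while the other sums truncate at $v$, so nothing is missing.
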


\begin{corollary} \label{corollary::log}
Suppose $S \sim \text{Log}(\rho)$, with PMF
\begin{align*}
    P(S=n) = -\frac{1}{\log(1-\rho)} \frac{\rho^k}{k}, \, k \in \mathbbm{Z}^{+}.
\end{align*}
Then for $v \in \{2, \dots, J \}$ and $n_v \geq 1$ 
\begin{align*}
    P \big[ & \mathbf{N}(t) = (n_1, \dots, n_v, 0, \dots 0 ) \big]\\
    & = \sum^{n_1}_{i_1 = 0} \dots \sum^{n_{v-1}}_{i_{v-1} = 0} \sum^{n_{v}}_{i_{v} = 1} \Bigg[ \frac{i_v}{n_v} \cdot  P \big[ \mathbf{N}(t) = (n_1-i_1, \dots, n_v-i_v, 0, \dots 0) \big] \\
    & \qquad \qquad \cdot \int^t_0 -\lambda(\tau) \frac{\rho^{\sum^v_{k=1} i_k}}{\log(1-\rho)} \bigg( \prod^v_{k=1} \frac{  q_k(t-\tau)^{i_k}}{i_k!} \bigg) \frac{\big(-1 + \sum^v_{v=1} i_k \big)!}{\big[ 1 - \rho \big( 1 - \sum^J_{k=1} q_k(t-\tau) \big) \big]^{\sum^v_{k=1} i_k}}  d \tau \Bigg].
\end{align*}
\end{corollary}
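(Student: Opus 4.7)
The plan is to invoke Theorem \ref{theorem:recurrence_relation} directly, which reduces the task to computing $P[\mathbf{C}(t-\tau) = (i_1, \dots, i_v, 0, \dots, 0)]$ in closed form under the logarithmic batch assumption. Because the batch is univariate with i.i.d.\ routing (Section \ref{sec::sundt_jewel_batch}), the PGF of $\mathbf{C}(t)$ collapses from the general expression in Equation (\ref{eq::compound_queue_PGF}) to
\begin{align*}
\EX\Bigg[\prod_{k=1}^J z_k^{C_k(t)}\Bigg] = G_S\Bigg(1 + \sum_{k=1}^J (z_k-1) q_k(t)\Bigg) = \frac{1}{\log(1-\rho)} \log\Bigg(1 - \rho - \rho \sum_{k=1}^J (z_k-1) q_k(t)\Bigg),
\end{align*}
so the bulk of the argument is a careful coefficient extraction.

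First, to obtain the joint probability that $C_k(t)=i_k$ for $k\le v$ and $C_k(t)=0$ for $k>v$, I would set $z_{v+1}=\dots=z_J=0$ in the PGF, which rewrites its argument as $\beta(t) - \rho \sum_{k=1}^v z_k q_k(t)$, where I abbreviate $\beta(t) := 1 - \rho\bigl(1 - \sum_{k=1}^J q_k(t)\bigr)$. Factoring out $\beta(t)$ splits the logarithm into a constant piece plus $\log\bigl(1 - \tfrac{\rho}{\beta(t)} \sum_{k=1}^v z_k q_k(t)\bigr)$, which is the only part contributing to nonzero powers of $z_1, \dots, z_v$.

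Next, I would Taylor-expand the logarithm as $-\sum_{m\ge 1} x^m/m$, apply the multinomial theorem to $\bigl(\sum_{k=1}^v z_k q_k(t)\bigr)^m$, and read off the coefficient of $\prod_{k=1}^v z_k^{i_k}$ at $m = \sum_{k=1}^v i_k$. This yields
\begin{align*}
P\bigl[\mathbf{C}(t) = (i_1,\dots,i_v,0,\dots,0)\bigr] = -\frac{1}{\log(1-\rho)}\cdot\frac{\bigl(\sum_{k=1}^v i_k - 1\bigr)!\,\rho^{\sum_{k=1}^v i_k}}{\beta(t)^{\sum_{k=1}^v i_k}\,\prod_{k=1}^v i_k!}\prod_{k=1}^v q_k(t)^{i_k},
\end{align*}
valid for $\sum_{k=1}^v i_k \ge 1$, which is guaranteed in the outer recursion by $i_v \ge 1$. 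Substituting this expression (with $t$ replaced by $t-\tau$) into Equation (\ref{eq::transient_recurrence_relation}) delivers precisely the corollary.

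The only step requiring any attention is the coefficient extraction: I need to confirm that the factor $\beta(t)^{-m}$ arises cleanly from pulling the constant out of the logarithm, and that the multinomial bookkeeping produces the $(m-1)!/\prod_k i_k!$ combinatorial factor from the $m!/m$ simplification. No convergence subtleties arise, since $\beta(t) > 0$ and $|\rho \sum_{k=1}^v z_k q_k(t)/\beta(t)| < 1$ on the closed unit polydisc, so the series manipulations are justified termwise. Everything else is direct substitution into Theorem \ref{theorem:recurrence_relation}.
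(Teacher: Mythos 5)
Your proposal is correct and follows the route the paper intends (the paper states this corollary without an explicit proof, as a direct specialisation of Theorem \ref{theorem:recurrence_relation}): you compute the PMF of $\mathbf{C}(t)$ by extracting the coefficient of $\prod_{k=1}^v z_k^{i_k}$ from $G_S\bigl(1+\sum_k(z_k-1)q_k(t)\bigr)$ with $z_{v+1}=\dots=z_J=0$, and the resulting expression, including the $(m-1)!/\prod_k i_k!$ factor and the $\beta(t)^{-m}$ denominator with $\beta(t)=1-\rho\bigl(1-\sum_{k=1}^J q_k(t)\bigr)$, matches the integrand in the corollary exactly. The coefficient extraction, the sign bookkeeping against $\log(1-\rho)<0$, and the justification that $\rho\sum_k q_k(t)/\beta(t)<1$ are all sound.
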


\subsubsection{Constant batch sizes}

Suppose $\mathbf{S} = \mathbf{s}$ is constant. Then $\mathbf{C}(t-\tau)$ is the sum of independent, but non-identical categorical random variables, that is, $\mathbf{C}(t-\tau)$ follows a Poisson multinomial distribution \parencite{lin2022poisson}. Using the exact DFT-CT method proposed by \textcite{lin2022poisson}, we can recover the PMF for $\mathbf{C}(t-\tau)$, which can then be plugged into the recurrence relation (\ref{eq::transient_recurrence_relation}) to compute the transient occupancy distribution.

\subsubsection{A model of superinfection and hypnozoite dynamics for vivax malaria}

In \textcite{mehra2022hypnozoite}, we constructed an open network of infinite server queues with exponential service times to model within-host superinfection and hypnozoite dynamics for \textit{Plasmodium vivax} malaria. A schematic of the queueing network, with nodes labelled $\{1, \dots, k, NL, A, D, C, P, PC \}$, is replicated from Figure 3 of \textcite{mehra2022hypnozoite} below.\\

\begin{figure}[ht]
    \centering
    \includegraphics[width=0.9\textwidth]{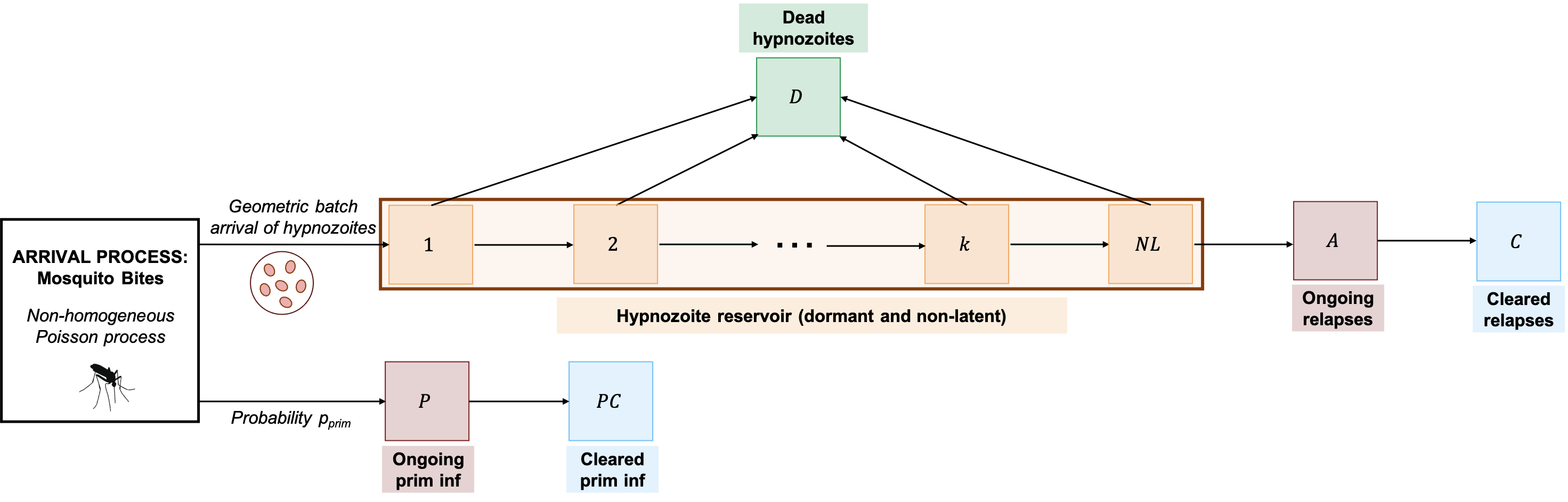}
    \caption{Model schematic replicated from Figure 3 of \textcite{mehra2022hypnozoite}}
    \label{fig:vivax_queue}
\end{figure}

The batch takes the form of a single arrival into queue $P$ with probability $p_\text{prim}$ (representing a primary blood-stage infection) and a geometrically-distributed arrival into queue $1$ (representing the establishment of dormant hypnozoites). Each hypnozoite then undergoes a compartmental process, accounting for death (queue $D$), or progression through successive latency compartments (queues $2, \dots, k$) and subsequent activation giving rise to a blood-stage relapse (queue $A$). We further account for the clearance of each primary infection (queue $P$ to $PC$) and relapse (queue $A$ to $C$). Analysis of the queueing network allows for the characterisation of quantities of epidemiological interest, with applications to population-level transmission modelling \parencite{mehra2022hybrid}.

\section{Conclusion}
We have extended results for single infinite-server queues with batch arrivals and open networks of infinite-server queues with single arrivals to a general network of infinite server queues with multivariate batch arrivals arriving according to a non-homogeneous Poisson process. Theorem 3.1 gives an expression for the PGF of the transient queue length distribution, Theorem 4.1 a necessary and sufficient condition for ergodicity and Theorem 5.1 a recurrence relation for the multivariate probability mass function. 

\section{Acknowledgements}
The authors would like to acknowledge funding from the Australian Research Council through Laureate Fellowship FL130100039.

\newpage
\printbibliography

\end{document}